\theoremstyle{plain}
  \newtheorem{theorem}{Theorem}[section]
  \newtheorem{proposition}[theorem]{Proposition}
  \newtheorem{lemma}[theorem]{Lemma}
  \newtheorem{corollary}[theorem]{Corollary}
\theoremstyle{definition}
  \newtheorem{definition}[theorem]{Definition}
  \newtheorem{example}[theorem]{Example}
 \theoremstyle{remark}
  \newtheorem{remark}[theorem]{Remark}
\numberwithin{equation}{section}
\def\RR{{\mathbb R}}
\def\CC{{\mathbb C}}
\def\red{\mathrm{red}}
\begin{document}

\title[A poset map to Bruhat order]{Sorting orders, subword complexes, Bruhat order and total positivity}
\author{Drew Armstrong}
\author{Patricia Hersh}
\address{Department of Mathematics, University of Miami, 1365 Memorial Drive, Ungar 515, Coral Gables, FL 33146}
\email{armstrong@math.miami.edu}
\address{Box 8205, North Carolina State University, Raleigh, NC 27695-8205}
\email{plhersh@ncsu.edu}

\thanks{First author supported by NSF grant DMS-0603567}
\thanks{Second author was supported by NSF grant DMS-0757935}
\subjclass{05E25, 14M15, 57N60, 20F55}


\begin{abstract}
In this note we construct a poset map from a Boolean algebra to the Bruhat order
which unveils an interesting connection between subword complexes, sorting orders, and certain totally nonnegative spaces. This relationship gives a new proof of Bj\"orner and Wachs' result \cite{BW} that the proper part of Bruhat order is homotopy equivalent to the proper part of a Boolean algebra --- that is, to a sphere. We also obtain a geometric interpretation for sorting orders. We conclude with two new results: that the intersection of all sorting orders is the weak order, and the union of sorting orders is the Bruhat order.
\end{abstract}

\maketitle

\dedicatory{Dedicated to Dennis Stanton on the occasion of his 60th birthday.}

\section{Introduction and terminology}
\label{definition-section}

The sorting orders (cf. \cite{Ar}) and subword complexes (cf. \cite{KM}) are combinatorial structures defined in terms of a finite Coxeter group $W$. In this note we show how these structures both arise 
in the context of
a certain poset map from a Boolean algebra to the Bruhat order on $W$. A consequence of this poset map is a new proof of the homotopy equivalence of the proper part of any Bruhat interval to a
sphere. 

We also give the following refinement of Armstrong's result from \cite{Ar} that the sorting orders lie in between the weak order and Bruhat order on $W$. Given $w\in W$, let $[\hat{0},w]_R$ denote the weak order interval from $\hat{0}$ to $w$. We show that the intersection of all sorting orders for $w$ is the weak order on $[\hat{0},w]_R$ and that the union of all sorting orders for $w$ is the Bruhat order on $[\hat{0},w]_R$. These results apply to the whole group by taking $w$ to be the longest element 
$w_\circ$.

We will need the following concepts.

\subsection{Poset Topology and the Quillen Fiber Lemma}

Given a partially ordered set (poset) $(P,\leq)$, 
we say that $x\prec y$ is a {\sf cover relation} if we have $x<y$ and if there does not exist $z\in P$ such that $x<z<y$.
Recall that the {\sf order complex}, denoted $\Delta(P)$, 
of a finite poset $P$ is the simplicial complex whose $i$-faces are the chains $z_0<z_1<\cdots <z_i$ of comparable poset elements. Let $P_{\leq p}$ denote the subposet of elements $q\in P$ satisfying $q\leq p$.   Given any simplicial complex $K$, let $F(K)$
be its {\sf face poset}, i.e. the partial order on the faces of $K$ by containment.  More generally,
the {\sf closure poset} of a cell complex is the partial order on its cells given by $\sigma \le \tau $
if and only if $\sigma \subseteq \overline{\tau }$.

\begin{remark}\label{barycenter}
For any 
simplicial complex $K$, $\Delta ( F(K) \setminus \{ \emptyset \} )$ is the first barycentric 
subdivision of $K$, hence is homeomorphic to $K$.
\end{remark}

We recall from Quillen ~\cite{Qu} the following result:

\begin{lemma}\label{quillen-fiber}
If $f: P\rightarrow P'$ is a poset map whose fibers $f^{-1}(P'_{\leq p})$ are all contractible,
then the order complexes of $P$ and $P'$ are homotopy equivalent.
\end{lemma}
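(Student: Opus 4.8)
The plan is to compare $\Delta(P)$ with $\Delta(P')$ through the \emph{mapping cylinder poset} $M$, formed from the disjoint union $P \sqcup P'$ by keeping the given orders on $P$ and on $P'$ and adjoining, for $x \in P$ and $p \in P'$, the relation $x < p$ precisely when $f(x) \leq p$ holds in $P'$. I would establish $\Delta(M) \simeq \Delta(P')$ (this step not using the hypothesis) and then $\Delta(M) \simeq \Delta(P)$ (this step using it), and conclude $\Delta(P) \simeq \Delta(M) \simeq \Delta(P')$.

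For the first equivalence: the map $r \colon M \to M$ that restricts to the identity on $P'$ and to $f$ on $P$ is monotone, idempotent with image $P'$, and satisfies $z \leq r(z)$ for every $z \in M$ (indeed $x < f(x)$ in $M$, since $f(x) \leq f(x)$). By the standard fact that two comparable monotone maps induce homotopic maps on order complexes, such a closure-type operator exhibits $\Delta(P')$ as a deformation retract of $\Delta(M)$.

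For the second equivalence I would rebuild $M$ from $P$ by adjoining the elements of $P'$ one at a time, following a linear extension of $P'$ read from the bottom up, so that each newly adjoined $p$ is a maximal element of the poset constructed so far. Passing from the order complex of the old poset $N$ to that of $N \cup \{p\}$ then merely attaches the cone over the subcomplex $\Delta(N_{<p})$, where $N_{<p} = f^{-1}(P'_{\leq p}) \sqcup P'_{<p}$; since attaching the cone over a contractible subcomplex does not change the homotopy type, it is enough to show each such $\Delta\big(f^{-1}(P'_{\leq p}) \sqcup P'_{<p}\big)$ is contractible, after which $\Delta(P) \hookrightarrow \Delta(M)$ is a homotopy equivalence.

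The hypothesis supplies only that $\Delta(f^{-1}(P'_{\leq p}))$ is contractible, not that the larger poset $f^{-1}(P'_{\leq p}) \sqcup P'_{<p}$ is; disposing of this gap is where the real work lies. The clean fix is to run the whole argument as an induction on $|P'|$ with the strengthened conclusion that $\Delta(f)$ itself is a homotopy equivalence, compatibly with restriction of $f$ to preimages of down-closed subsets of $P'$. Applied to the restriction $f^{-1}(P'_{<p}) \to P'_{<p}$ — whose fibers are again among the fibers of $f$, hence contractible — the inductive hypothesis gives $\Delta(f^{-1}(P'_{<p})) \simeq \Delta(P'_{<p})$, and gluing this onto the contractible $\Delta(f^{-1}(P'_{\leq p}))$ along $\Delta(f^{-1}(P'_{<p}))$ shows the needed subcomplex is contractible. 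The remaining bookkeeping — that all the relevant inclusions are inclusions of subcomplexes, since preimages of down-sets are down-sets, so that the homotopy-pushout gluing lemma applies — is routine. (Alternatively, one can simply invoke Quillen's Theorem A, of which this statement is the poset special case.)
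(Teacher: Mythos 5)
The paper offers no proof of this lemma at all: it is quoted directly from Quillen \cite{Qu} (it is the poset special case of Quillen's Theorem~A), so there is no in-paper argument to compare with, and your proposal should be judged as a self-contained proof --- which it essentially is, and it is correct. The mapping-cylinder poset $M$ is the standard device; the closure-operator retraction of $\Delta(M)$ onto $\Delta(P')$ is right, and adjoining the elements of the (finite) poset $P'$ along a linear extension correctly reduces everything to contractibility of $\Delta\bigl(f^{-1}(P'_{\leq p})\sqcup P'_{<p}\bigr)$, the attachment locus of each cone. You correctly flag that the hypothesis alone does not give this, and your strengthened induction on $|P'|$ (``the induced map $\Delta(f)$ is a homotopy equivalence'') does close the gap; the only wording to tighten is the gluing step. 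The complex $\Delta\bigl(f^{-1}(P'_{\leq p})\sqcup P'_{<p}\bigr)$ contains mixed chains, so the second piece of your union must be the order complex of the sub-mapping-cylinder on $f^{-1}(P'_{<p})\sqcup P'_{<p}$, not $\Delta(P'_{<p})$ itself; the covering is legitimate because any chain meeting $P'_{<p}$ has a least $P'$-element $q<p$, forcing all of its $P$-elements into $f^{-1}(P'_{\leq q})\subseteq f^{-1}(P'_{<p})$, and the two pieces then overlap exactly in $\Delta(f^{-1}(P'_{<p}))$. The inductive statement for the restriction $f^{-1}(P'_{<p})\to P'_{<p}$ (whose fibers are fibers of $f$), combined with the cylinder retraction and a two-out-of-three argument, shows that this overlap includes into the cylinder piece by a homotopy equivalence, so the gluing lemma gives that the whole attachment locus is homotopy equivalent to the contractible $\Delta(f^{-1}(P'_{\leq p}))$, exactly as you intend. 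In short: your route supplies an elementary, purely combinatorial-topological proof where the paper simply cites Quillen; the citation buys brevity, your argument buys self-containedness (and, as you note, one could instead invoke Theorem~A directly).
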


Recall that the {\sf dual poset} of $P$, denoted $P^*$, is defined by setting $p\leq q$ in $P^*$ whenever $p\geq q$ in $P$.

\begin{remark}\label{dual-qfl}
Since a poset and its dual have the same order complex, we may restate the Quillen Fiber Lemma by requiring contractibility of $f^{-1}(P'_{\geq p})$ for each $p\in P'$.
\end{remark}

Define the {\sf closed interval} 
$[u,v]$ of a poset $P$ as $\{ z\in P | u\le z \le v\} $ and the {\sf open interval} $(u,v)$ as
$\{ z\in P | u < z < v\} $.

\subsection{Sorting Orders}
We recall from \cite{Ar} the definition of the sorting orders. Let $W$ be a finite Coxeter group with simple generators $S=\{s_1,\ldots,s_n\}$. Let $Q$ be a word in the generators whose product is $w\in W$. If there is no such word with fewer symbols, we call $Q$ {\sf reduced}; the {\sf length} $\ell(w)$ is the number of symbols in a reduced word for $w$. We define the {\sf (right) weak order} on $W$ by setting $u\leq_R v$ if there exists a reduced word for $v$ such that $u$ occurs as a prefix. The {\sf Bruhat order} is defined by setting $u\leq_B v$ if there exists a reduced word for $v$ such that $u$ occurs as an arbitrary subword.

Now fix a reduced word $Q$  for some element $w\in W$. It turns out that the elements which occur as subwords of $Q$ are the elements of the Bruhat interval $[\hat{0},w]_B$. To each element $u\in [\hat{0},w]_B$ we associate a subset $Q(u)$ of the indices of $Q$ whose corresponding subword is reduced for $u$ and which is the lexicographically first subset with this property; such a subword exists by the definition of Bruhat order, i.e. we have $u\le_{Br} v$ if and only if any reduced word for $v$ has a subword that is a reduced word for $u$. The {\sf $Q$-sorting order} is defined by setting $u\leq_Q v$ whenever $Q(u)\subseteq Q(v)$. Armstrong \cite{Ar} proved the following.

\begin{theorem}[Ar, Theorem 4.2]\label{sorting}
Let $Q$ be a reduced word for $w\in W$. Given $u$ and $v$ in the Bruhat interval $[\hat{0},w]_B$, we have
\begin{equation*}
u\leq_R v \quad\Rightarrow\quad u\leq_Q v \quad\Rightarrow\quad u\leq_B v.
\end{equation*}
That is, the $Q$-sorting order is between the weak and Bruhat orders.
\end{theorem}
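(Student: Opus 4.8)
The plan is to reduce both implications to a single structural description of the set $Q(u)$: a left-to-right ``peeling'' procedure that computes it. Write $Q=(q_1,\dots,q_N)$, let $\pi$ denote the product of a word, and for a simple generator $s$ recall that $s$ lies in the left descent set $D_L(y)$ if and only if $\ell(sy)<\ell(y)$. \emph{Claim:} $Q(u)$ is produced by the greedy scan that starts with $y:=u$ and $I:=\emptyset$ and, for $i=1,\dots,N$, adds $i$ to $I$ and replaces $y$ by $q_iy$ exactly when $q_i\in D_L(y)$, doing nothing otherwise. The two points to check are that the scan ends with $y=e$ --- equivalently, that $I$ indexes a reduced word for $u$ --- and that $I$ is the lexicographically first such index set. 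Both follow once one knows the scan maintains the invariant $y\le_B\pi(q_i\cdots q_N)$: when $q_i\notin D_L(y)$ this is the lifting property of Bruhat order applied to $y\le_B\pi(q_i\cdots q_N)$ and the descent $q_i\in D_L(\pi(q_i\cdots q_N))$ (valid since $q_i\cdots q_N$ is a reduced suffix of $Q$); when $q_i\in D_L(y)$ it follows from the subword characterization of Bruhat order together with $q_iy\le_B y$. Granting the invariant, $y\le_B e$ at termination forces $y=e$, and lexicographic minimality holds because at each stage the first position whose letter is a left descent of the current $y$ is exactly the smallest index that can begin a reduced subword for $y$. I expect isolating the precisely correct Exchange/lifting inputs for this characterization to be the main obstacle; the rest is bookkeeping.

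Given the characterization, the implication $u\le_Q v\Rightarrow u\le_B v$ is immediate: $Q(u)\subseteq Q(v)$ says that the reduced word indexed by $Q(u)$ is a subword of the reduced word indexed by $Q(v)$, so $u\le_B v$ by the definition of Bruhat order.

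For $u\le_R v\Rightarrow u\le_Q v$ I would run the greedy scan simultaneously for $u$ and for $v$, writing $y^u_i$ and $y^v_i$ for the two current elements just before step $i$, and prove by induction on $i$ the invariant: $y^u_i\le_R y^v_i$, and $Q(u)\cap\{1,\dots,i-1\}\subseteq Q(v)\cap\{1,\dots,i-1\}$. The base case is $y^u_1=u\le_R v=y^v_1$. For the inductive step use the elementary fact that $a\le_R b$ implies $D_L(a)\subseteq D_L(b)$. If $q_i\in D_L(y^u_i)$ then $q_i\in D_L(y^v_i)$ as well, both scans include $i$, and the new elements still satisfy $q_iy^u_i\le_R q_iy^v_i$ (since $a\le_R b$ with $s\in D_L(a)$ forces $sa\le_R sb$, by deleting the leading $s$ from a reduced word for $b$ of the form $s\,(\text{reduced for }sa)(\text{reduced for }a^{-1}b)$). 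If $q_i\notin D_L(y^u_i)$ but $q_i\in D_L(y^v_i)$, only the $v$-scan includes $i$, which is harmless for the containment, and one must verify $y^u_i\le_R q_iy^v_i$: setting $a:=y^u_i$, $b:=y^v_i$ and $b=ac$ with $\ell(b)=\ell(a)+\ell(c)$, the Exchange Property applied to a reduced word $\mathbf{a}\mathbf{c}$ for $b$ yields $q_ib$ by deleting one letter, and that letter cannot lie in $\mathbf{a}$ (deleting it there would force $q_ia=q_ibc^{-1}$ to have length at most $\ell(a)-1$, contradicting $q_i\notin D_L(a)$), so it lies in $\mathbf{c}$, $\mathbf{a}$ survives as a prefix, and $a\le_R q_ib$. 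Since $D_L(y^u_i)\subseteq D_L(y^v_i)$, the remaining case is $q_i\notin D_L(y^v_i)$, which leaves both scans unchanged. Taking $i=N+1$ gives $Q(u)\subseteq Q(v)$, that is, $u\le_Q v$, which completes the proof.
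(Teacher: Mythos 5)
The paper does not actually prove this statement --- it is quoted from Armstrong [Ar, Theorem~4.2] --- so there is no internal proof to compare against; your proposal is a self-contained proof of the cited result, and it is essentially correct. The greedy left-descent scan does compute $Q(u)$: your invariant $y\le_B \pi(q_i\cdots q_N)$ guarantees both termination at $y=e$ (so the selected positions index a reduced word for $u$) and, combined with the observation that any reduced subword for $y$ must begin at a left-descent position, lexicographic minimality; the second implication is then immediate from the subword characterization of Bruhat order; and the simultaneous-scan induction for the first implication is sound, resting on three correct facts: $a\le_R b\Rightarrow D_L(a)\subseteq D_L(b)$, the ``both take'' case via prefix deletion, and the ``$u$ skips, $v$ takes'' case via the Exchange Property with the deleted letter forced into the $\mathbf{c}$-part. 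One small imprecision: in the ``take'' case of your invariant, the justification ``subword characterization together with $q_iy\le_B y$'' as written only yields $q_iy\le_B\pi(q_i\cdots q_N)$, whereas you need $q_iy\le_B\pi(q_{i+1}\cdots q_N)$; this follows from the same lifting property you invoke in the ``skip'' case (the branch $s\in D_L(y)$ gives $q_iy\le_B q_i\,\pi(q_i\cdots q_N)$), or from a short case analysis on whether a reduced subword for $y$ in $(q_i,\dots,q_N)$ uses position $i$ --- a one-line fix, not a genuine gap. Your route (greedy characterization of the sorting word plus descent/lifting/exchange arguments) is close in spirit to Armstrong's original proof, while this paper simply cites it.
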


\subsection{The 0-Hecke Algebra}

Given a finite Coxeter group $W$ with generators $S=\{s_1,\ldots,s_n\}$, 
recall that the corresponding $0$-Hecke algebra has a 
generating set $X=\{x_1,\ldots,x_n\}$ with the 
following relations (up to a sign). Each 
Hecke generator is idempotent, namely satisfies
$x_i^2=x_i$, whereas the Coxeter generators are involutions $s_i^2=e$. 
Also, for each Coxeter braid relation $(s_is_j)^{m(i,j)}=e$ we get a corresponding
 Hecke braid relation of the form $x_ix_jx_i\cdots=x_jx_ix_j\cdots$, with $m(i,j)$ 
 alternating terms on each side.
Recall from ~\cite{BB}:

\begin{theorem}[BB, Theorem 3.3.1]\label{word-property}
Let $(W,S)$ be a Coxeter group generated by $S$.  Consider $w\in W$.  
\begin{enumerate}
\item 
Any expression $s_{i_1}s_{i_2}\cdots 
s_{i_d}$ for $w$ can be transformed into a reduced expression for $w$ by a sequence of
nil-moves and braid-moves.
\item
Every two reduced expressions for $w$ can be connected via a sequence of braid-moves.
\end{enumerate}
\end{theorem}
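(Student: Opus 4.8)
The plan is to derive both parts from the Exchange Property of Coxeter groups, which I take as known: if $s_{j_1}\cdots s_{j_q}$ is a reduced expression for $v$ and $s\in S$ satisfies $\ell(vs)<\ell(v)$, then $vs=s_{j_1}\cdots\widehat{s_{j_p}}\cdots s_{j_q}$ for some $p$, and symmetrically on the left. I would prove part (2) first, by induction on $\ell(w)$, and then invoke it inside the proof of part (1). Throughout, a \emph{nil-move} deletes a factor $s_is_i$ and a \emph{braid-move} replaces a factor $\underbrace{s_is_js_i\cdots}_{m(i,j)}$ by $\underbrace{s_js_is_j\cdots}_{m(i,j)}$.

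For part (2): write two reduced expressions for $w$ as $\mathsf{Q}=(s_a,\mathsf{Q}_1)$ and $\mathsf{Q}'=(s_b,\mathsf{Q}'_1)$. If $a=b$, then $\mathsf{Q}_1,\mathsf{Q}'_1$ are reduced expressions for $s_aw$, which has smaller length, so the inductive hypothesis connects them by braid-moves and hence connects $\mathsf{Q}$ to $\mathsf{Q}'$. If $a\neq b$, then $s_a$ and $s_b$ are both left descents of $w$; the key sublemma is that the longest element $w_\circ^{\{a,b\}}$ of the finite dihedral parabolic $\langle s_a,s_b\rangle$ is a length-additive left factor of $w$, i.e. $w=w_\circ^{\{a,b\}}u$ with $\ell(w)=\ell(w_\circ^{\{a,b\}})+\ell(u)$. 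Granting this, fix a reduced word $\mathsf{R}$ for $u$; since $\mathsf{Q}_1$ is a reduced expression for the shorter element $s_aw$, the inductive hypothesis transforms $\mathsf{Q}$ into $(\underbrace{s_a,s_b,s_a,\ldots}_{m(a,b)},\mathsf{R})$; one braid-move on the prefix yields $(\underbrace{s_b,s_a,s_b,\ldots}_{m(a,b)},\mathsf{R})$; and the inductive hypothesis applied to the suffix after the initial $s_b$ (a reduced word for $s_bw$) carries this to $(s_b,\mathsf{Q}'_1)=\mathsf{Q}'$. The sublemma itself follows from iterated Exchange: the alternating products $\underbrace{\cdots s_bs_a}_{k}w$ have length $\ell(w)-k$ for $k=1,\ldots,m(a,b)$, and taking $k=m(a,b)$ together with the fact that $w_\circ^{\{a,b\}}$ is an involution gives the factorization.

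For part (1): induct on the number $d$ of letters in $\mathsf{Q}=(s_{i_1},\ldots,s_{i_d})$. If $d=\ell(w)$ there is nothing to do, so assume $d>\ell(w)$ and let $j$ be minimal with $(s_{i_1},\ldots,s_{i_j})$ not reduced. Then $(s_{i_1},\ldots,s_{i_{j-1}})$ is reduced and $s_{i_j}$ decreases the length of the element it represents, so the Exchange Property gives $k<j$ with $s_{i_1}\cdots s_{i_{j-1}}=s_{i_1}\cdots\widehat{s_{i_k}}\cdots s_{i_{j-1}}s_{i_j}$. Cancelling the common reduced prefix $s_{i_1}\cdots s_{i_{k-1}}$ shows that $(s_{i_k},\ldots,s_{i_{j-1}})$ and $(s_{i_{k+1}},\ldots,s_{i_{j-1}},s_{i_j})$ are two reduced expressions for one and the same element (the first is a factor of a reduced word, hence reduced; the second then has the right length). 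By part (2), a sequence of braid-moves carries the first to the second; performing exactly these moves on the corresponding consecutive block of $\mathsf{Q}$ is a legitimate sequence of braid-moves on $\mathsf{Q}$, and the result is a word whose positions $j-1$ and $j$ both equal $s_{i_j}$. A single nil-move deletes this adjacent pair, leaving a $(d-2)$-letter expression for $w$, to which the inductive hypothesis applies.

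The main obstacle is part (2), and within it the sublemma that two left descents of $w$ force $w_\circ^{\{a,b\}}$ to divide $w$ on the left — everything else is bookkeeping once that and the Exchange Property are available. In part (1) the one point worth checking carefully is that braid-moves restricted to a consecutive block of a longer word are still braid-moves on the whole word, which is immediate from the definition but is exactly what makes the reduction to part (2) go through.
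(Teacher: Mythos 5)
This statement is not proved in the paper at all: it is quoted verbatim from Bj\"orner--Brenti \cite{BB} (Theorem 3.3.1, the word property of Tits--Matsumoto) and used as a black box, so there is no in-paper argument to compare against. Your proposal is essentially the standard proof of that cited theorem, and its architecture is sound: part (1) correctly reduces to part (2) via the Exchange Property (the prefix up to the first non-reduced position is reduced, Exchange produces two reduced words for the same element differing by a shifted block, braid-moves on that consecutive block create an adjacent repeated letter, and a nil-move shortens the word by two); part (2) is the classical induction on $\ell(w)$, splitting on whether the two first letters agree and, when they differ, routing both words through a reduced expression beginning with the alternating word of length $m(a,b)$.

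The one place where your write-up is thinner than it should be is exactly the spot you flag: the sublemma that two distinct left descents $s_a,s_b$ of $w$ force $w=w_\circ^{\{a,b\}}u$ with $\ell(w)=m(a,b)+\ell(u)$. Saying it ``follows from iterated Exchange: the alternating products have length $\ell(w)-k$'' asserts the inductive claim without supplying its engine --- at each step you must show that the next alternating letter is again a descent of the current element, which does not follow from a single application of Exchange (already at $k=2$ one needs an argument: take a reduced word for $w$ beginning with $s_b$, apply Exchange for $s_a$, and note the deleted letter cannot be the initial $s_b$ since $a\neq b$). The clean standard way to close this is the length-additive parabolic coset factorization $w=u\,v$ with $u\in W_{\{a,b\}}$ and $v$ of minimal length in $W_{\{a,b\}}w$, which converts both descents of $w$ into descents of $u$ in the dihedral group and forces $u=w_\circ^{\{a,b\}}$ (this is how \cite{BB} arranges matters, via their lifting/coset lemmas); finiteness of $\langle s_a,s_b\rangle$ is automatic here since $W$ is finite. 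With that lemma either proved along these lines or cited, the rest of your argument --- including the observation that braid-moves on a consecutive block are braid-moves on the whole word, and that the inner appeals to part (2) are all at length $\ell(w)-1$, so the induction is well-founded --- is correct.
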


This implies that
an arbitrary word in the $0$-Hecke generators may be {\sf reduced} by a sequence of nil-moves $x_i^2\to x_i$ and braid moves $x_ix_jx_i\cdots \to x_jx_ix_j\cdots$. A reduced
word in the $0$-Hecke generators corresponds to a reduced
word in the Coxeter generators by switching occurrences of $x_i$ with $s_i$. Note that likewise
nonreduced  words in the 0-Hecke algebra correspond to nonreduced words in the associated
Coxeter group.  Our upcoming 
poset map from a Boolean algebra to Bruhat order
will take the subexpressions of a fixed reduced expression in the 
0-Hecke algebra and send them to Coxeter group elements.  

\subsection{Subword Complexes}

We recall from \cite{KM} and \cite{KM2} 
the definition of a subword complex. Let $Q$ be a (not necessarily reduced) 
word in the generators  and let $w\in W$ be an arbitrary element of the Coxeter group. The {\sf subword complex} $\Delta(Q,w)$ has as its ground set the indexing positions of the list $Q$ and its facets are the complementary sets to subwords of $Q$ that are reduced expressions for $w$. Since the set of faces of a simplicial complex is closed under taking subsets, the faces of $\Delta(Q,w)$ are precisely the complements of the 
subwords of $Q$ that contain a reduced expression for $w$.

Knutson and Miller \cite{KM} proved that subword complexes are vertex decomposable --- hence shellable --- and they gave a characterization of the homotopy type. To describe this we will need the notion of a {\sf Demazure product}. For a commutative ring $R$, define a free $R$-module with generators indexed by the Coxeter group $\{e_w : w\in W\}$ and with multiplication as follows. Let $w\in W$ be a group element and let $s\in S$ be a simple generator. Let $\ell:W\to\mathbb{Z}$ denote the word length on $W$ with respect to the generators $S$. If $s$ is a right descent of $w$ --- that is, if $\ell(w)>\ell(ws)$ --- we set $e_we_s=e_w$, and otherwise we set $e_we_s=e_{ws}$.

\begin{remark}
Notice that the Demazure product is equivalent to the multiplication rule for the 0-Hecke 
algebra from the previous section.
\end{remark}

\begin{theorem}[KM, Corollary 3.8]\label{subword}
The subword complex $\Delta (Q,w)$ is either a ball or a top-dimensional sphere.  It is a sphere if and only if the Demazure product applied to $Q$ yields exactly $w$. 
\end{theorem}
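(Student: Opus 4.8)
The plan is to carry out Knutson and Miller's induction on the length $|Q|$, using the behaviour of subword complexes under links and deletions, and to track the Demazure product $\delta$ (the $0$-Hecke product) alongside. Write $Q=(Q',s)$ where $s$ is the last letter of $Q$ and $p$ is the corresponding ground-set vertex. The facts I would record first are: (i) the link of $p$ in $\Delta(Q,w)$ is the subword complex $\Delta(Q',w)$, and the deletion of $p$ is $\Delta(Q',w)$ when $\ell(ws)>\ell(w)$ and $\Delta(Q',ws)$ when $\ell(ws)<\ell(w)$; (ii) $\delta(Q)$ is obtained from $\delta(Q')$ by one further Demazure multiplication by $s$, so $\delta(Q)=\delta(Q')$ if $s$ is a right descent of $\delta(Q')$ and $\delta(Q)=\delta(Q')s$ otherwise; and (iii) $\Delta(Q,w)$ is void precisely when $w\not\le_B\delta(Q)$, and otherwise it is pure of dimension $|Q|-\ell(w)-1$. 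Point (i) says that links and deletions of subword complexes are again subword complexes --- this is exactly what makes them vertex decomposable (quoted above) --- so throughout the induction every complex in sight is a vertex decomposable, hence shellable and PL, ball or sphere, with a well-behaved boundary.

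The induction hypothesis is the full statement: $\Delta(Q,w)$ is void, a ball, or a sphere, and is a sphere iff $\delta(Q)=w$. The base case $Q=\emptyset$ is immediate, since $\Delta(\emptyset,w)$ is the $(-1)$-sphere $\{\emptyset\}$ if $w=e=\delta(\emptyset)$ and is void otherwise. For the inductive step, suppose first that $\ell(ws)>\ell(w)$. Then the deletion of $p$ coincides with its link, so $\Delta(Q,w)$ is the cone $\{p\}\ast\Delta(Q',w)$; by induction $\Delta(Q',w)$ is void, a ball, or a sphere, so $\Delta(Q,w)$ is correspondingly void or a ball, and a short computation with (ii) gives $\delta(Q)\ne w$ in the nonvoid case, as required. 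Suppose next that $\ell(ws)<\ell(w)$. Then $\Delta(Q,w)=\Delta(Q',ws)\cup\bigl(\{p\}\ast\Delta(Q',w)\bigr)$, with the two pieces meeting exactly in $\Delta(Q',w)$. The key geometric input is that $\Delta(Q',w)$ lies in the \emph{boundary} of $\Delta(Q',ws)$: each facet of $\Delta(Q',w)$ is the complement of a reduced word $P'$ for $w$, this complement is a codimension-one face of $\Delta(Q',ws)$ (a reduced word for $w$ contains one for $ws$ since $\ell(ws)<\ell(w)$), and the strong exchange condition \cite{BB} produces exactly one letter of $P'$ whose deletion is a reduced word for $ws$ --- so that face lies in a unique facet of $\Delta(Q',ws)$. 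Granting this, by the inductive hypothesis $\Delta(Q',ws)$ and $\Delta(Q',w)$ are each void/ball/sphere, and a case analysis driven by the value of $\delta(Q')$ (whether it equals $w$, equals $ws$, or neither --- noting that $\Delta(Q',w)$ nonvoid forces $w\le_B\delta(Q')$ and hence rules out $\delta(Q')=ws$) shows that gluing the cone $\{p\}\ast\Delta(Q',w)$ onto $\Delta(Q',ws)$ along $\Delta(Q',w)$ produces a ball or a sphere, and produces a sphere exactly when $\delta(Q)=\delta(Q')s=w$. The topological facts invoked are all standard in the PL category: a cone over a ball or sphere is a ball; two $d$-balls glued along a common boundary $(d-1)$-sphere form a $d$-sphere; two $d$-balls glued along a $(d-1)$-ball in the boundary of each form a $d$-ball; and a full-dimensional subsphere of a sphere is the entire sphere.

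I expect the real work to lie in two places. The first is establishing that $\Delta(Q',w)$ embeds in the boundary of $\Delta(Q',ws)$ --- this is where the Coxeter-group combinatorics genuinely enters, through the uniqueness clause of the strong exchange condition (and, if one wants a fully self-contained treatment of the ancillary facts (ii) and (iii), through Theorem~\ref{word-property}). The second is the bookkeeping that keeps the criterion ``$\delta(Q)=w$'' exactly aligned with ``sphere'' across every branch of the case analysis, including the degenerate branches where a link is void; the compatibility observations (such as the one ruling out $\delta(Q')=ws$ when the link is nonempty) are what make this go through cleanly, and are the only genuinely fiddly point. Everything else --- the gluing lemmas, and the cone case --- is routine once vertex decomposability has placed us in the PL world. (One could instead deduce ``ball or sphere'' from the theorem of Danaraj and Klee that a shellable pseudomanifold with every codimension-one face in at most two facets is a ball or a sphere; but pinning down when the boundary is empty still needs the same strong-exchange input, so I would favour the self-contained inductive argument above.)
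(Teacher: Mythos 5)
This statement is quoted by the paper from Knutson--Miller \cite{KM} (their Corollary 3.8) and is not proved in the paper at all, so there is no in-paper argument to compare against; what you have written is a reconstruction of the external result, and as such it is essentially sound. Your link/deletion recursion on the last letter is exactly the mechanism behind Knutson--Miller's vertex-decomposability theorem, and your identifications are correct: the link of $p$ is $\Delta(Q',w)$; the deletion is $\Delta(Q',w)$ when $\ell(ws)>\ell(w)$ (so the complex is a cone) and is $\Delta(Q',ws)$ when $\ell(ws)<\ell(w)$ (using $\Delta(Q',w)\subseteq\Delta(Q',ws)$ via the subword property); and the uniqueness clause of the strong exchange condition is precisely what makes each facet of $\Delta(Q',w)$ a codimension-one face of $\Delta(Q',ws)$ lying in a unique facet, hence a boundary face. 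The case analysis you sketch does close up: nonvoidness of the link forces $w\le_B\delta(Q')$, which rules out $\delta(Q')=ws$ and so forces $\Delta(Q',ws)$ to be a ball; if $\delta(Q')=w$ the link is a full-dimensional sphere in $\partial\Delta(Q',ws)$, hence equals it, and the two balls glue along their common boundary to a sphere with $\delta(Q)=w$; if $\delta(Q')>w$ the link is a $(d-1)$-ball in the boundary of each piece and the union is a $d$-ball with $\delta(Q)\ne w$; the void-link branch reduces to $\Delta(Q',ws)$ with $\delta(Q)=w$ iff $\delta(Q')=ws$. Note, though, that this self-contained PL-gluing induction is a genuinely different route from Knutson--Miller's published proof of Corollary 3.8, which deduces ``ball or sphere'' from shellability plus the Danaraj--Klee theorem and then detects the sphere case by their boundary-face characterization in terms of Demazure products (their Theorem 3.7) --- the alternative you mention parenthetically is in fact the original argument. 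Two small cautions: PL-ness of the balls and spheres should be carried explicitly through the induction (or imported from shellability via Danaraj--Klee) rather than attributed to vertex decomposability per se, and the unwritten bookkeeping you flag (the lifting-property checks aligning voidness and $\delta(Q)$ across branches) is real work, but nothing in it fails.
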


In the language of 
Section ~\ref{poset-map-section}, 
this means that we get a ball unless a certain poset map applied to $Q$ yields exactly the group element $w$, rather than yielding an element strictly above $w$ in Bruhat order.

Subword complexes arose in \cite{KM2} as the Stanley-Reisner 
complexes of  initial ideals  for the (determinantal) vanishing ideals of
matrix Schubert varieties.  Shellability of subword complexes provides one way
(of many) to explain why matrix Schubert varieties are Cohen-Macaulay, and the 
combinatorics of subword complexes provides insight into formulas for 
Grothendieck polynomials which are the $K$-polynomials of matrix Schubert
varieties.  

\subsection{Stratified Totally Nonnegative Spaces}\label{stratified-spaces}
Here, and in the following sections, we will use the notation $(i_1,i_2,\ldots,i_d)$ to denote either the word $s_{i_1}\cdots s_{i_d}$ in the Coxeter generators or the word $x_{i_1}\cdots x_{i_d}$ in the $0$-Hecke generators, depending on context.

Recall that a matrix is totally nonnegative if each minor is nonnegative.
It was shown by Whitney (\cite{Wh}) that every $n\times n$ totally nonnegative,
upper triangular matrix with $1$'s on the diagonal can be written as a product of 
matrices $\{ x_i(t): t\in \RR_{\ge 0},\, 1\le i \le n-1 \} $ where $x_i(t) = I_n + t E_{i,i+1}$
for $I_n$ the identity matrix and
$E_{i,i+1}$ the matrix whose only nonzero entry is a 1 which is in row $i$ and
column $i+1$.  Lusztig generalized this type A notion to all semisimple, simply 
connected algebraic groups over $\CC $ split over $\RR $.  
Lusztig showed for any reduced word $(i_1,\dots ,i_d)$ that the
set $Y_{s_{i_1}\cdots s_{i_d}}$
of points $\{ x_{i_1}(t_1)\cdots x_{i_d}(t_d): t_1,\dots ,t_d\in \RR_{>0} \} $ is 
homeomorphic to an open ball. Now consider the decomposition of the closure of
this space, namely $\{ x_{i_1}(t_1)\cdots x_{i_d}(t_d): t_1,\dots ,t_d \in \RR_{\ge 0} \} $, 
into cells defined by which subset of parameters are nonzero. Fomin and Shapiro \cite{FS} showed that the closure poset for this decomposition is the Bruhat order, and conjectured that this stratified space is a regular CW complex; this was recently proven by Hersh \cite{He} via a map on points of a space 
which induces a map on cells, namely the poset map of the next section.

\section{A Poset Map From Subwords to Bruhat Order}\label{poset-map-section}

Let us now  define a poset map from a Boolean algebra to Bruhat order which is
derived from
 Lusztig's map $(t_1,\dots ,t_d) \rightarrow x_{i_1}(t_1)\cdots x_{i_d}(t_d)$,
where $(i_1,\dots ,i_d)$ is any fixed reduced word.  First note that 
setting the parameter $t_j$  to 0 amounts to replacing the
matrix $x_{i_j}(t_j)$ by the identity matrix, essentially choosing the subword of
$(i_1,\dots ,i_d)$ with $i_j$ eliminated, hinting at a connection to the subword complexes
of \cite{KM}.   The relations
$x_i(t_1)x_i(t_2) = x_i(t_1+t_2)$ and 
$$x_i(t_1)x_{i+1}(t_2)x_i(t_3) = 
x_{i+1}(\frac{t_2t_3}{t_1+t_3})x_i (t_1+t_3)x_{i+1}(\frac{t_1t_2}{t_1+t_3})$$
along with similar relations corresponding to higher degree braid relations 
(cf. \cite{FZ})
allow us to use the relations in the 0-Hecke algebra to determine how to assign
points $(t_1,\dots ,t_d)\in \RR_{\ge 0}^d$ to cells in $\overline{Y_{s_{i_1}\cdots s_{i_d}}}$,
i.e. to elements of Bruhat order.

The poset map we now analyze
comes from choosing a subset of the parameters $\{ t_1,\dots ,t_d\} $ to
set to 0, taking the resulting subword of $(i_1,\dots ,i_d)$ and determining the Coxeter
group element which has this as one of its (not necessarily reduced) words in the
0-Hecke algebra.  That is:

\begin{definition}
Let $Q=(i_1,\dots ,i_d)$ 
be a reduced word for a Coxeter group element $w\in W$. Define a map $f$ from the Boolean algebra $2^Q$ to the Bruhat interval $[\hat{0},w]_B$ as follows. Given a subset
$\{ j_1,\dots ,j_r \} $ of the indices $\{ 1,\dots ,d\} $ 
of $Q$, map this to a (possibly non-reduced) expression $x_{i_{j_1}}\cdots x_{i_{j_r}} $
in the $0$-Hecke generators; use braid moves and nil-moves to obtain a reduced $0$-Hecke 
expression; then map this to a Coxeter group element by replacing each $x_k$ with the 
corresponding Coxeter group generator $x_k$.
\end{definition}

For example, consider the Coxeter group of type $A_3$ with generators $\{s_1,s_2,s_3\}$, and consider the reduced word $Q=(1,2,3,1,2,1)$ for the longest element $w_\circ=s_1s_2s_3s_1s_2s_1$. The index set $\{1,2,4,5\}\in 2^Q$ gets mapped to the $0$-Hecke word $x_1x_2x_1x_2$, which reduces to
\begin{equation*}
x_1(x_2x_1x_2)\to x_1(x_1x_2x_1)\to (x_1x_1)x_2x_1\to x_1x_2x_1.
\end{equation*}
Finaly, this maps to the group element $s_1s_2s_1$.

We have the following results.

\begin{proposition}
Each fiber of $f$ contains exactly one element of 
the $Q$-sorting order, and every element of the $Q$-sorting order belongs to some fiber of $f$.  The subposet of the Boolean algebra $2^Q$ comprised of these elements is isomorphic to the $Q$-sorting order.
\end{proposition}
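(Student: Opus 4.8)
The plan is to work with the explicit description of the $Q$-sorting order in terms of the sets $Q(u) \subseteq \{1,\dots,d\}$, and to show that $Q(u)$ is precisely the lexicographically-first element of the fiber $f^{-1}(u)$. First I would recall how $Q(u)$ is defined: it is the lexicographically first subset of indices whose corresponding subword of $Q$ is a reduced word for $u$. In particular $Q(u)$ lies in $f^{-1}(u)$, since a subword that is itself reduced for $u$ is mapped by $f$ to $u$ (the $0$-Hecke reduction does nothing beyond possibly applying braid moves). So each fiber contains at least one ``sorting-order index set'' $Q(u)$, and these index sets are pairwise distinct for distinct $u$ because $f$ sends $Q(u)$ to $u$; this already shows that the map $u \mapsto Q(u)$ embeds the $Q$-sorting order into $2^Q$ as a subposet (by the very definition $u \leq_Q v \iff Q(u) \subseteq Q(v)$, the embedding is automatically order-preserving and order-reflecting).

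The substantive point is that $Q(u)$ is the \emph{unique} element of its fiber that arises this way, equivalently that $f^{-1}(u)$ meets the set $\{Q(v) : v \in [\hat 0,w]_B\}$ in exactly one point. Suppose $A \in f^{-1}(u)$ has the form $A = Q(v)$ for some $v$. Applying $f$ to $A = Q(v)$ gives $v$ on one hand (since $Q(v)$ is a reduced subword for $v$) and $u$ on the other (since $A \in f^{-1}(u)$), forcing $u = v$ and hence $A = Q(u)$. So in fact the ``sorting-order elements inside a fiber'' are exactly the sets $Q(u)$, and there is exactly one per nonempty fiber. I should double-check that every fiber is nonempty, i.e. that $f$ is surjective onto $[\hat 0, w]_B$; this follows from the characterization of Bruhat order recalled in the excerpt (every $u \leq_B w$ occurs as a reduced subword of the reduced word $Q$), so $f(Q(u)) = u$ exhibits surjectivity.

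The main obstacle, and the step deserving the most care, is verifying that applying $f$ to $Q(u)$ really returns $u$ and not some element strictly above $u$ in Bruhat order. This is where one must use that the subword of $Q$ indexed by $Q(u)$ is \emph{reduced} for $u$: the $0$-Hecke reduction procedure (nil-moves $x_i^2 \to x_i$ and braid-moves) applied to an already-reduced $0$-Hecke word can only perform braid-moves, which by Theorem~\ref{word-property}(2) keep us among reduced words for the same element, so the output is a reduced word for $u$ and $f$ returns $u$ exactly. (By contrast, for a general subword of $Q$ the nil-moves genuinely lower the length and $f$ can overshoot $u$.) Once this is pinned down, the remaining assertions are immediate: the set of ``sorting elements in fibers'' is $\{Q(u) : u \in [\hat 0,w]_B\}$, it has one element per fiber, and the induced subposet of $2^Q$ is the $Q$-sorting order by definition of $\leq_Q$.
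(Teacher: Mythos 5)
Your proposal is correct and follows the same route the paper intends: the paper's proof is simply ``immediate from the definition of the sorting order,'' and your write-up just fills in the details of that definitional argument (that $Q(u)\in f^{-1}(u)$, that each fiber contains exactly one such set, and that $u\mapsto Q(u)$ realizes the $Q$-sorting order as a subposet of $2^Q$). Your extra care in checking that $f(Q(u))=u$ --- because reducing an already-reduced $0$-Hecke word uses only braid moves --- is a worthwhile elaboration but not a different method.
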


\begin{proof}
This is immediate from the definition of the sorting order.
\end{proof}

\begin{proposition}\label{fibers}
For $u\in [\hat{0},w]_B$, the fiber $f^{-1}([u,w]_B)$ is dual to the face poset of the subword complex $\Delta(Q,u)$.  The fiber $f^{-1}((u,w)_B)$ is dual to the face poset for
$\partial (\Delta (Q,u))\setminus \{ \emptyset \} $, i.e. for the boundary of the subword
complex $\Delta(Q,u)$ with the empy set removed.
\end{proposition}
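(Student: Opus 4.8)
The plan is to unwind the definitions of both the map $f$ and the subword complex and check that they describe the same combinatorial object (up to order-reversal). Recall that a subset $T = \{j_1 < \dots < j_r\} \subseteq \{1,\dots,d\}$ of indices of $Q$ satisfies $f(T) \in [u,w]_B$ precisely when the $0$-Hecke word $x_{i_{j_1}}\cdots x_{i_{j_r}}$, after reduction, spells out a group element $\geq_B u$. By the remark identifying the Demazure product with the $0$-Hecke multiplication, this reduced element is exactly the Demazure product of the subword $Q|_T$ indexed by $T$, and a standard fact (essentially the ``subword property'' of Theorem~\ref{word-property}) is that the Demazure product of a word is $\geq_B u$ if and only if that word contains some reduced expression for $u$ as a subword. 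So $f(T) \in [u,w]_B$ iff $Q|_T$ contains a reduced word for $u$. On the other side, by the definition recalled from \cite{KM} and \cite{KM2}, the faces of $\Delta(Q,u)$ are exactly the complements $\{1,\dots,d\}\setminus T$ of index sets $T$ such that $Q|_T$ contains a reduced expression for $u$. Thus $T \mapsto \{1,\dots,d\}\setminus T$ is a bijection between $f^{-1}([u,w]_B)$ and the faces of $\Delta(Q,u)$.

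First I would verify this bijection is order-reversing: $T \subseteq T'$ in $2^Q$ iff the complements satisfy $\bar{T} \supseteq \bar{T'}$ as faces of $\Delta(Q,u)$, which is the covering/containment relation of the face poset $F(\Delta(Q,u))$. One subtlety is that $f^{-1}([u,w]_B)$ is a subposet of $2^Q$, while $F(\Delta(Q,u))$ includes $\emptyset$; but $\emptyset$ corresponds under complementation to the full set $\{1,\dots,d\}$, and $f(\{1,\dots,d\}) = w \in [u,w]_B$ always, so no discrepancy arises --- both posets have a top element ($\{1,\dots,d\}$, resp.\ $\emptyset$, which match up). This gives the first assertion: $f^{-1}([u,w]_B)$ is dual (i.e.\ order-reversal isomorphic) to $F(\Delta(Q,u))$.

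For the second assertion, I would pass to the open interval. We have $f^{-1}((u,w)_B) = f^{-1}([u,w]_B) \setminus f^{-1}(u) \setminus f^{-1}(w)$. The fiber $f^{-1}(w)$ consists of those $T$ whose subword $Q|_T$ has Demazure product exactly $w$; since $Q$ itself is reduced for $w$, the only such $T$ with the Demazure product equal to (not strictly above) $w$ \emph{and} equal to $w$... here one must be careful: $f^{-1}(w)$ is the single-element set $\{\{1,\dots,d\}\}$ precisely because $Q$ is reduced, so any proper subword of $Q$ has a strictly shorter reduced form hence Demazure product $\neq w$ unless... actually a proper subword can still have Demazure product $w$. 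Let me instead argue cleanly: removing $f^{-1}(w)$ from $F(\Delta(Q,u))^*$ removes, under complementation, exactly the faces of $\Delta(Q,u)$ whose complement $T$ has Demazure product $w$; by Theorem~\ref{subword} applied appropriately these are exactly the facets together with faces interior to $\Delta(Q,u)$ --- more precisely the non-faces of the boundary $\partial\Delta(Q,u)$. Hence $f^{-1}((u,w]_B)$ is dual to $F(\partial\Delta(Q,u))$. Then removing $f^{-1}(u)$ removes the top element of the open interval... wait, $u$ is the bottom: $f(T) = u$ iff $Q|_T$ is a reduced word for $u$, i.e.\ iff $\bar{T}$ is a facet of $\Delta(Q,u)$. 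So passing from $F(\partial \Delta(Q,u))$ down to $F(\partial\Delta(Q,u))\setminus\{\emptyset\}$ (noting $\emptyset = \overline{\{1,\dots,d\}}$ and $f(\{1,\dots,d\}) = w \notin (u,w)$) --- I need to re-index which extreme gets cut --- gives the claimed identification of $f^{-1}((u,w)_B)$ with $(F(\partial\Delta(Q,u))\setminus\{\emptyset\})^*$.

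The main obstacle I anticipate is the bookkeeping around the two ``extreme'' fibers $f^{-1}(u)$ and $f^{-1}(w)$ and making sure the removal of $\emptyset$ from the face poset matches the removal of exactly the right group elements, together with pinning down precisely why ``complement has Demazure product $w$'' is equivalent to ``complement is \emph{not} a face of $\partial\Delta(Q,u)$'' --- this uses Theorem~\ref{subword} (the ball/sphere dichotomy and its link characterization in \cite{KM}) and is where I would be most careful. The rest --- the dictionary between subwords, Demazure products, and Bruhat order, and the order-reversing nature of complementation --- is routine given Theorem~\ref{word-property} and the remark equating the Demazure product with $0$-Hecke multiplication.
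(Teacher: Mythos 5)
Your treatment of the first assertion is fine and matches the paper's: identify $f^{-1}([u,w]_B)$ with the subwords of $Q$ containing a reduced word for $u$ (via the standard fact that the Demazure product of a word is $\geq_B u$ iff the word contains a reduced expression for $u$), and then complement to get the face poset of $\Delta(Q,u)$, order reversed.

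The second assertion, however, has a genuine gap, and the bookkeeping you attempt around it is wrong in several concrete places. (a) $f^{-1}(w)=\{Q\}$: any word whose Demazure product is $w$ contains a reduced word for $w$ and hence has at least $\ell(w)=|Q|$ letters, so your worry that ``a proper subword can still have Demazure product $w$'' is unfounded; consequently removing $f^{-1}(w)$ deletes exactly the empty face, not ``the facets together with the interior faces.'' (b) $f(T)=u$ does \emph{not} mean $Q|_T$ is a reduced word for $u$; it means the Demazure product of $Q|_T$ is exactly $u$, and under complementation these $T$ account for \emph{all} interior faces of $\Delta(Q,u)$ (the reduced subwords give only the facets). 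The correct dictionary is: $f^{-1}(u)\leftrightarrow$ interior faces, $f^{-1}((u,w]_B)\leftrightarrow$ faces of $\partial\Delta(Q,u)$, $f^{-1}(w)\leftrightarrow\{\emptyset\}$. (c) The crux of the whole statement is precisely the equivalence ``$F$ lies in the interior of $\Delta(Q,u)$ iff the Demazure product of $Q\setminus F$ equals $u$'' (equivalently, $F\in\partial\Delta(Q,u)$ iff that product is strictly above $u$). You defer this to ``Theorem~\ref{subword} applied appropriately,'' but the ball/sphere dichotomy as stated does not give it, and you even misstate the equivalence to be proven (``complement has Demazure product $w$'' versus ``not a face of the boundary''). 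The paper supplies exactly this missing lemma: if the Demazure product of $Q\setminus F$ is $u$, then every face $G\supseteq F$ of dimension one less than the facets has complement with Demazure product $u$ and exactly $\ell(u)+1$ letters, hence lies in exactly two facets; so no such $G$, and therefore no such $F$, lies in the boundary subcomplex. Without proving this (or explicitly invoking the corresponding interior-face characterization from \cite{KM}), and with the fibers misassigned as above, the identification of $f^{-1}((u,w)_B)$ with $\partial(\Delta(Q,u))\setminus\{\emptyset\}$ is not established.
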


\begin{proof}
For $u\in[\hat{0},w]_B$, note that the elements $S=\{j_1,j_2,\ldots,j_r\}\in 2^Q$ with $f(S)=u$ are exactly those such that $x_{j_1}x_{j_2}\cdots x_{j_r}$ reduces via braid moves and nil-moves to a reduced expression $x_{i_1}x_{i_2}\cdots x_{i_t}$ such that $s_{i_1}s_{i_2}\cdots s_{i_t}=u$. In particular, the minimal such $S$ are precisely the subwords of $Q$ which are reduced words for $u$. Thus the fiber $f^{-1}(u)$ consists of all subwords of $Q$ which contain a reduced expression for $u$.
This is exactly the dual of the subword complex $\Delta(Q,u)$.  

To see the latter claim, notice that a face $F$
in $f^{-1}([u,w]_B)$ has the same Demazure product as $u$  if and only if $F$ is in the 
interior of $\Delta (Q,u)$, because we will show that every face $G$ of $\Delta (Q,u)$
which contains $F$ and has dimension exactly one less than the facets of $\Delta (Q,u)$ is
itself in the interior of $\Delta (Q,u)$.  This follows from the fact that any such $G$ must have
the same Demazure product as  $u$ while involving exactly one more letter
than a reduced expression for $u$, implying $G$ is contained in exactly two facets of
$\Delta (Q,u)$, hence in its interior.
\end{proof}

Hence we obtain a new proof characterizing the homotopy type of Bruhat order (this result was originally proved by Bj\"orner and Wachs \cite{BW}, who invented the technique of CL-shellability for this purpose).

\begin{corollary}
The order complex for the open Bruhat interval $(\hat{0},w)_B$ 
is homotopy equivalent to the order complex for the 
proper part of the Boolean algebra $2^Q$ of dimension $\ell(w)$, hence to a sphere
$S^{\ell(w) -2}$.  More generally, the order complex for the open 
Bruhat interval $(u,w)_B$ is homotopy equivalent to a sphere $S^{\ell(w)-\ell(u)-2}$.
\end{corollary}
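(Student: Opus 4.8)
The plan is to deduce this from the dual form of the Quillen Fiber Lemma (Lemma~\ref{quillen-fiber}, Remark~\ref{dual-qfl}) applied to a restriction of the poset map $f$, using Proposition~\ref{fibers} to recognize the fibers as barycentric subdivisions of subword complexes and Theorem~\ref{subword} to see that those subword complexes are contractible in exactly the range we need.

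First I would record two elementary observations about $f$. Since the Demazure product is monotone --- enlarging a subword can only move its image weakly upward in Bruhat order --- the map $f$ is order preserving, as is already needed for Proposition~\ref{fibers}. Moreover $f(S)=w$ forces $S=Q$: the element $f(S)$ has length at most $|S|\le d=\ell(w)$, and equality can hold only when $S$ is a full-length reduced subword of $Q$, which means $S=Q$; dually $f(S)=\hat 0$ only for $S=\emptyset$. Hence, for any $u<w$ in $[\hat 0,w]_B$, the map $f$ restricts to an order-preserving map $f\colon f^{-1}((u,w)_B)\to (u,w)_B$, and when $u=\hat 0$ the domain $f^{-1}((\hat 0,w)_B)$ is precisely the proper part of the Boolean algebra $2^Q$, which has rank $d=\ell(w)$.

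Next I would verify the hypothesis of the dual Quillen Fiber Lemma for this restricted map: for every $v\in(u,w)_B$ the fiber $f^{-1}\bigl((u,w)_{B,\ge v}\bigr)=f^{-1}([v,w)_B)$ must be contractible. Since $f^{-1}(w)=\{Q\}$ and $Q$ does not lie in the domain, this fiber equals $f^{-1}([v,w]_B)\setminus\{Q\}$; by Proposition~\ref{fibers} the closed fiber $f^{-1}([v,w]_B)$ is dual to the face poset $F(\Delta(Q,v))$, and deleting $Q$ on the subword side corresponds to deleting the empty face, so the fiber is dual to $F(\Delta(Q,v))\setminus\{\emptyset\}$. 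By Remark~\ref{barycenter} its order complex is the first barycentric subdivision of $\Delta(Q,v)$, hence homeomorphic to $\Delta(Q,v)$, and because the Demazure product of the reduced word $Q$ is $w\ne v$, Theorem~\ref{subword} tells us $\Delta(Q,v)$ is a ball, so it is contractible. The dual Quillen Fiber Lemma then yields $\Delta\bigl(f^{-1}((u,w)_B)\bigr)\simeq\Delta\bigl((u,w)_B\bigr)$. On the other hand, Proposition~\ref{fibers} identifies $f^{-1}((u,w)_B)$ with the dual of $F(\partial\Delta(Q,u))\setminus\{\emptyset\}$, whose order complex is homeomorphic to $\partial\Delta(Q,u)$ by Remark~\ref{barycenter}; since the facets of $\Delta(Q,u)$ are the complements of the reduced words for $u$, they have size $d-\ell(u)$, so $\Delta(Q,u)$ is an $(\ell(w)-\ell(u)-1)$-ball and $\partial\Delta(Q,u)\cong S^{\ell(w)-\ell(u)-2}$. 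Combining the two homotopy equivalences gives $\Delta((u,w)_B)\simeq S^{\ell(w)-\ell(u)-2}$; the case $u=\hat 0$, where $\Delta(Q,\hat 0)$ is the full simplex on the vertex set of $Q$, recovers the sphere $S^{\ell(w)-2}$ and the statement about the proper part of $2^Q$.

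The one place that needs care is the bookkeeping in the third paragraph. The target of the restricted map is the \emph{open} interval $(u,w)_B$, so its principal up-sets are the half-open intervals $[v,w)_B$, and the relevant fiber is the closed fiber of Proposition~\ref{fibers} with the single vertex $Q$ removed. It is precisely this removal of $Q$ --- equivalently, of the empty face on the subword-complex side --- that converts the contractible cone $\Delta\bigl(F(\Delta(Q,v))\bigr)$ into the barycentric subdivision of $\Delta(Q,v)$, and it is essential that $v\neq w$, for otherwise $\Delta(Q,v)$ would be a top-dimensional sphere rather than a ball and the fiber would fail to be contractible. Granting this, the corollary is a direct assembly of Proposition~\ref{fibers}, Theorem~\ref{subword}, Remark~\ref{barycenter}, and the dual Quillen Fiber Lemma.
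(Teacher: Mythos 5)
Your proposal is correct and follows essentially the same route as the paper: the dual Quillen Fiber Lemma applied to the restriction of $f$, with Proposition~\ref{fibers} and Remark~\ref{barycenter} identifying the fibers (up to barycentric subdivision) with the subword complexes $\Delta(Q,v)$, which are balls by Theorem~\ref{subword} since the Demazure product of the reduced word $Q$ is $w\neq v$, and with $f^{-1}((u,w)_B)\cong\partial\Delta(Q,u)\cong S^{\ell(w)-\ell(u)-2}$ giving the sphere. Your bookkeeping (removal of $Q$, i.e.\ of the empty face, and the identification of $f^{-1}((\hat 0,w)_B)$ with the proper part of $2^Q$) is in fact somewhat more explicit than the paper's own argument, but it is the same proof.
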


\begin{proof} 
This is a simple application of the Quillen Fiber Lemma, namely 
Lemma ~\ref{quillen-fiber}, which originally appeared in [Qu].  Proposition ~\ref{fibers}
shows that the fibers of our poset map are face posets of subword complexes. 
It was shown
in ~\cite{KM} that subword complexes are shellable.  By Theorem  ~\ref{subword}, we 
see  in particular that all fibers for the proper part of the interval $[\hat{0},w]_B$
are contractible, since the fact that the maximal element (that is, $Q$) is a 
reduced word means that any proper subset has distinct Demazure product from it.  
Thus, by Remark ~\ref{barycenter} the order complexes of the
fibers in this case are homeomorphic to contractible subword complexes.
This gives the requisite contractibility of fibers,  completing our proof of the 
first claim.

The Quillen Fiber Lemma also tells us for any open Bruhat interval $(u,w)_B$ that its
order complex is homotopy equivalent to the order complex of the
subposet $f^{-1}(u,w)$ of a Boolean algebra.  By Proposition ~\ref{fibers}, this preimage
is the boundary of a ball, hence is a sphere.  Its dimension is immediate from the definition
of subword complexes.
\end{proof}

In ~\cite{He}, Hersh proved that the stratified spaces from 
Section ~\ref{stratified-spaces} are regular CW complexes with 
Bruhat order as their closure posets.  Her method was to start with a simplex, namely 
$$\{ (t_1,\dots ,t_d) \in \RR_{\ge 0}^d : t_1 + \cdots + t_d = 1 \} ,$$
having the Boolean algebra
as its closure poset, and then perform a series of collapses preserving 
homeomorphism type and regularity.   See ~\cite{He} for details, including the precise
notion of  ``collapse'' being used, as this is quite involved.  Example ~\ref{collapse-example}
is included in hopes of conveying in a brief manner some intuition for these collapses.

What is relevant here is (1) that the Bruhat order is the closure poset for the regular CW
complex obtained at the end of this collapsing
process, and (2) that our poset map describes which faces
of the simplex are mapped to a particular cell in the regular CW complex having  
Bruhat order as its closure poset.

\begin{example}\label{collapse-example}
Given the reduced word $(1,2,1)$ in type $A_2$, \cite{He} performs a single collapse on the simplex $\{ (t_1,t_2,t_3)\in \RR_{\ge 0}^3: t_1 + t_2 + t_3 = 1\} $,  eliminating the
open cell $\{ (t_1, 0 , t_3 )\in \RR_{\ge 0}^3 : t_1 + t_3 = 1\} $ by identifying points
which have $t_1 + t_3 = k$ for any constant $k$; in this case we just have $k=1$, but 
in larger examples we have a family of constants $k$ ranging from $0$ 
to $1$.  In effect, this moves the cell $\{ (t_1,0,0) \in \RR_{\ge 0}^3  : t_1 = 1 \} $ 
across the cell $\{ (t_1,0,t_3)  \in \RR_{\ge 0}^3 : t_1 + t_3 = 1\} $ being collapsed,
thereby identifying the cell $\{ (t_1,0,0)\in \RR_{\ge 0}^3 : t_1 = 1\} $ with the cell
$\{ (0,0,t_3)\in \RR_{\ge 0}^3 : t_3 = 1\} $, reflecting the fact that $s_1 \cdot 1 \cdot 1$ and $1\cdot 1\cdot s_1 $ are both reduced words for the same Coxeter group element.
\end{example}

Some cell incidences are present from the beginning to the end of this collapsing process
appearing in ~\cite{He}, whereas others are created along the way.
The sorting order captures this distinction:

\begin{proposition}\label{geometric-meaning}
The sorting order on a finite Coxeter group is generated by precisely
those covering relations on Bruhat order between cells that were already incident in
the simplex, i.e. it leaves out exactly
those cell incidences that were introduced by collapses.
\end{proposition}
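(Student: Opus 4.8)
The plan is to translate the geometric statement into combinatorics using the poset map $f$ and the explicit description of fibers from Proposition~\ref{fibers}. The key observation is that a cover relation $u \prec v$ in Bruhat order corresponds to two cells of the regular CW complex, and these two cells were already incident in the original simplex (Boolean algebra) if and only if there is some subword $S \in 2^Q$ with $f(S) = v$ that contains, as a subset, some subword $S' \subseteq S$ with $f(S') = u$; equivalently, the fiber $f^{-1}(v)$ meets the downward closure of the fiber $f^{-1}(u)$ inside $2^Q$. Since $f^{-1}(u)$ consists of all subwords of $Q$ containing a reduced word for $u$, and $f^{-1}(v)$ of all subwords containing a reduced word for $v$, this incidence condition says: there is a subword $S$ of $Q$ that is reduced for $v$ (i.e.\ a facet of $\Delta(Q,u)$'s ambient list, realized appropriately) such that deleting letters of $S$ yields a reduced word for $u$. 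But that is exactly the statement that $u \leq_R v$ restricted to the letters used, and more precisely that $u$ is obtained from $v$ by deleting a single letter from some reduced word for $v$ that itself appears as a subword of $Q$. Unwinding the definition of $Q(u)$ and $Q(v)$, I expect this to coincide precisely with the cover relations of the $Q$-sorting order.

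The main steps would be: (1) Fix notation identifying cells of the CW complex with elements of $[\hat{0},w]_B$ via $f$, and recall from Proposition~\ref{fibers} that the faces of $2^Q$ mapping into $[u,w]_B$ form the dual of $F(\Delta(Q,u))$. (2) Characterize ``already incident in the simplex'' for a pair of cells $u \prec v$: two faces of the simplex are incident iff one contains the other, so the cells $u$ and $v$ were incident from the start iff some face $S$ with $f(S) = v$ has a subface $S'$ with $f(S') = u$ — equivalently, iff $f^{-1}(v) \cap (2^Q)_{\leq S'} \neq \emptyset$ for some $S' \in f^{-1}(u)$, which since fibers are upward-closed-complement structures reduces to: some minimal element of $f^{-1}(u)$ (a reduced subword for $u$) extends to a minimal element of $f^{-1}(v)$ (a reduced subword for $v$) by adding exactly the letters needed, all within $Q$. (3) Show this condition, for a Bruhat cover $u \prec v$, is equivalent to $Q(u) \subseteq Q(v)$, i.e.\ to $u \leq_Q v$; here one uses that for a cover, $\ell(v) = \ell(u)+1$, so the relevant subwords differ by one letter, and the lexicographic-minimality defining $Q(u)$ and $Q(v)$ must be shown compatible. (4) Conclude that the transitive closure of the incidences-present-from-the-start is exactly the $Q$-sorting order, invoking Theorem~\ref{sorting} to know it sits between weak and Bruhat order, and that collapses only ever add incidences (never remove them), so the ``new'' incidences are precisely Bruhat covers not in the sorting order.

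The hard part will be step~(3), and in particular verifying that the lexicographically-first reduced subwords $Q(u)$ and $Q(v)$ interact correctly with the one-letter extension: it is conceivable that $u \leq_Q v$ yet the specific reduced subword for $v$ witnessing the geometric incidence is not $Q(v)$ itself but another reduced subword for $v$ — so I would need to argue that if \emph{any} reduced subword of $Q$ for $v$ deletes to a reduced subword for $u$, then in fact $Q(u) \subseteq Q(v)$. This should follow from an exchange-condition / subword-property argument (Theorem~\ref{word-property}) pushing the witnessing subwords toward their lexicographic minima without destroying the containment, but getting the bookkeeping exactly right is where the real work lies. A secondary subtlety is confirming that the collapsing procedure of \cite{He} genuinely only identifies cells and never separates incident cells, so that ``present from the beginning'' is well-defined and monotone along the collapses; I would cite \cite{He} for the fact that each collapse is a quotient map that can only merge faces, hence can only create incidences.
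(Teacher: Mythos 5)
There is a genuine gap, and it sits exactly where you predicted: step (3) is not just hard, it is false. Your proposed combinatorial reading of ``already incident in the simplex'' --- namely that some face $S\in 2^Q$ with $f(S)=v$ contains a subface $S'$ with $f(S')=u$ --- is satisfied by \emph{every} Bruhat relation $u\leq_B v$ with $u,v\in[\hat{0},w]_B$. Indeed, since $v\leq_B w$ there is a reduced subword $S\subseteq Q$ for $v$, and by the subword property any reduced word for $v$ contains a reduced word for $u$; that reduced word is automatically again a subword of $Q$, so $S'$ exists. Hence your criterion cannot be equivalent to $Q(u)\subseteq Q(v)$, because the sorting covers are a proper subset of the Bruhat covers. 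Concretely, in type $B_2$ with $Q=(2,1,2,1)$, take $u=s_2s_1$ and $v=s_1s_2s_1$: then $u\prec_B v$, the reduced subword of $Q$ for $v$ in positions $\{2,3,4\}$ contains the reduced subword for $u$ in positions $\{3,4\}$, so your condition holds, yet $Q(u)=\{1,2\}\not\subseteq\{2,3,4\}=Q(v)$, so $u\not\leq_Q v$. The implication you hoped to extract from the exchange/subword property (``if any reduced subword of $Q$ for $v$ deletes to a reduced subword for $u$, then $Q(u)\subseteq Q(v)$'') is therefore false, and no amount of bookkeeping will repair it.

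The missing idea is the identification of each cell of the final regular CW complex with one \emph{specific} face of the simplex, namely the lexicographically first reduced subword $Q(u)$, and this identification is exactly what the paper's proof establishes by tracking the collapses of \cite{He}: a subword $P$ that is not lex-first for $f(P)$ contains the larger element of a deletion pair inside some $P\vee P'$ obtained by adding one letter, and the collapse of $P\vee P'$ slides $P$ onto the lex-earlier $P'$; iterating, all non-lex-first faces in a fiber are collapsed onto the lex-first one, which is held fixed (up to homeomorphism) throughout the process. Only after this does ``incident from the beginning'' become well defined as containment of the surviving faces $Q(u)\subseteq Q(v)$, which is literally the definition of $\leq_Q$, while every other Bruhat incidence is created by some collapse. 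Your step (4) (collapses only merge, never separate) is in the right spirit and is indeed the easy half; but without the analysis of which face survives as the cell --- i.e., without engaging with the deletion-pair mechanics of \cite{He} rather than only with the fibers of $f$ --- the proposed argument characterizes the wrong relation.
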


\begin{proof}
Consider a subword $P=(i_{j_1},\dots ,i_{j_k})\in 2^Q$ of the given word $Q=(i_1,\dots ,i_d)$ such that $\{ j_1,\dots ,j_k \} $ is not a $Q$-sorting word --- that is, it is not lexicographically first among subwords that give rise to the group element $f(P)$. Following the language of \cite{He}, this means that 
$P$ must include the larger element of some deletion pair in a 
word $P\vee P'$ obtained from $P$ by adding exactly one letter to obtain a subword 
of $Q$.  This means that the larger element of the deletion pair may be 
exchanged for the smaller one to obtain a lexicographically smaller reduced word for
the group element $f(P)$.  In particular, the face given by $P$
gets identified with a lexicographically earlier choice $P'$ by a collapsing step in \cite{He}
which collapses a higher dimensional cell $P\vee P'$ including both elements of the deletion pair
by moving $P$ across $F$ so as to identify it with $P'$.

Thus, the  faces given by the lexicographically first reduced 
expressions are essentially 
held fixed (ignoring unimportant stretching homeomorphisms on them) 
by the collapsing process of \cite{He} while all other faces are
collapsed onto these, creating new cell incidences in the process.
The incidences among the lexicographically first reduced expressions
are inclusions, which is precisely the definition of the $Q$-sorting order.  Examining 
the collapsing process in \cite{He}, we see that these are exactly 
the incidences that are present from the beginning to the end of the collapsing process --- that is, left unchanged by it. All other incidence relations
in Bruhat order are introduced by collapses.
\end{proof}

\begin{remark}
Proposition ~\ref{geometric-meaning} gives a way to think about the $Q$-sorting order as a closure poset, and also a geometric explanation for the fact that its cover relations are a subset of Bruhat covers. \end{remark}

\begin{corollary}
The covering relations in the $Q$-sorting order are a proper subset of
the covering relations in the Bruhat interval $[\hat{0},w]_B$.
\end{corollary}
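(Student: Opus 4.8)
The plan is to split the statement into two parts: that every covering relation of the $Q$-sorting order is a covering relation of Bruhat order on $[\hat{0},w]_B$, and that the inclusion is strict. For the first part, Proposition~\ref{geometric-meaning} exhibits the $Q$-sorting order as the transitive closure of a set $C$ of Bruhat covering relations (the cell incidences ``already present in the simplex''); since in any finite poset the covering relations form a subset of every set of relations generating the order, every $Q$-sorting cover lies in $C$ and is therefore a Bruhat cover. I would emphasize that Theorem~\ref{sorting} by itself only gives $u\leq_Q v\Rightarrow u\leq_B v$, and that the point of invoking Proposition~\ref{geometric-meaning} is precisely to rule out a Bruhat element lying strictly between $u$ and $v$ when $u\lessdot_Q v$.

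For strictness it suffices to produce a single Bruhat cover $u\lessdot_B v$ that is not a $Q$-sorting cover. First I would reduce this to finding $u,v$ with $u<_B v$ but $u\not\leq_Q v$: given such a pair chosen with $\ell(v)-\ell(u)$ minimal, the fact that every Bruhat interval contains elements of each intermediate rank forces $\ell(v)-\ell(u)=1$, so $(u,v)$ is a Bruhat cover that is not even a $\leq_Q$-comparability, hence not a $Q$-sorting cover. Now such a pair exists whenever the poset map $f\colon 2^Q\to[\hat{0},w]_B$ is not injective, equivalently whenever $\#[\hat{0},w]_B<2^{\ell(w)}$, equivalently whenever some reduced word for $w$ (hence every one) repeats a letter; for $w=w_\circ$ the only exception is $W$ a product of rank-one groups, so a mild hypothesis such as ``$W$ has an irreducible factor of rank $\geq 2$'' makes the claim unconditional. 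Concretely, as in Example~\ref{collapse-example}, if $i_p=i_q$ for some $p<q$ in $Q$, then by Proposition~\ref{geometric-meaning} the associated deletion pair is exactly the datum of a collapse in \cite{He} that creates a new incidence: the subword at positions $\{p,q\}$ is non-reduced, and one extracts from it a Bruhat cover $u\lessdot_B v$ with $Q(u)\not\subseteq Q(v)$, so $u\not\leq_Q v$.

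The main obstacle is the strictness step, and within it the bookkeeping required to write down, for an \emph{arbitrary} fixed reduced word $Q$, an explicit Bruhat covering pair not belonging to the $Q$-sorting order; one must also flag the degenerate case (Boolean $[\hat{0},w]_B$), in which the statement as phrased fails, and incorporate the mild genericity hypothesis that excludes it. By contrast the first part, the inclusion of cover sets, is immediate once Proposition~\ref{geometric-meaning} is in hand.
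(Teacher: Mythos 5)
Your first half is fine and is exactly how the paper reads the statement: the corollary is stated as an immediate consequence of Proposition~\ref{geometric-meaning}, which presents the $Q$-sorting order as generated by a subset of the Bruhat covering relations (the incidences already present in the simplex), so every sorting cover is one of those generators and hence a Bruhat cover; your remark that Theorem~\ref{sorting} alone would not rule out an intermediate element is well taken, and your general observation that covers must belong to any set of strict relations generating the order is correct.

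The strictness half is where you have a genuine gap. The paper gets properness from the \emph{same} proposition: it says the sorting order leaves out exactly those incidences that are \emph{introduced by collapses}, and any such introduced incidence is a Bruhat cover that is not a sorting cover --- no further construction is needed (in the setting the authors intend, e.g.\ $w=w_\circ$ of a group with a factor of rank at least two, collapses creating new incidences certainly occur). You instead attempt an independent combinatorial existence argument, and its crucial step is only asserted: from a repeated letter $i_p=i_q$ in $Q$ you claim ``one extracts a Bruhat cover $u\prec_B v$ with $Q(u)\not\subseteq Q(v)$,'' but no extraction is given, and the implication ``$f$ not injective $\Rightarrow$ there exist $u<_B v$ with $u\not\leq_Q v$'' is precisely the content that needs proof (your reduction from a missing comparability to a missing cover via minimal length difference is fine, but it does not supply the pair). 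You flag this yourself as the main obstacle; as written it is a hole, and the most economical repair is to do what the paper does and read properness off of Proposition~\ref{geometric-meaning}. Your caveat about the degenerate case is a fair point the paper glosses over: when every reduced word for $w$ has distinct letters the interval $[\hat{0},w]_B$ is Boolean, the sorting and Bruhat orders coincide, and ``proper'' fails, so some nondegeneracy hypothesis (equivalently, that at least one collapse creates a new incidence) is implicitly assumed.
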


Finally, we establish two new properties of the sorting orders.

\section{Unions and Intersections of Sorting Orders}
Given an element $w\in W$ of a finite Coxeter group, let $\red(w)$ denote the set of reduced words for $w$ in the generators $S$. For each $Q\in\red(w)$ there is a corresponding $Q$-sorting order on the elements of the Bruhat interval $[\hat{0},w]_B$. In this section, however, we will consider the $Q$-sorting order as restricted to the elements of the interval $[\hat{0},w]_R$ in (right) {\em weak} order. We will show that the intersection of $Q$-sorting orders, over $Q\in\red(w)$, is the weak order on $[\hat{0},w]_R$ and the union of these $Q$-sorting orders is the Bruhat order on $[\hat{0},w]_R$. Our results apply to the entire group $W$ by taking the longest element $w=w_\circ$.

\begin{theorem}
The intersection over $Q\in\red(w)$ of $Q$-sorting orders is the weak order on the weak interval $[\hat{0},w]_R$.
\end{theorem}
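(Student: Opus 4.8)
The plan is to establish the two inclusions of partial orders separately, obtaining one of them for free from the result we are refining. If $u\leq_R v$ with $u,v\in[\hat{0},w]_R\subseteq[\hat{0},w]_B$, then Theorem~\ref{sorting} gives $u\leq_Q v$ for every $Q\in\red(w)$, so the pair $(u,v)$ lies in the intersection of all the $Q$-sorting orders. Thus the weak order on $[\hat{0},w]_R$ is contained in that intersection, and it remains to prove the reverse implication: if $u\leq_Q v$ for all $Q\in\red(w)$, then $u\leq_R v$.

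For the reverse inclusion the idea is to test the hypothesis against one cleverly chosen reduced word. Since $u\in[\hat{0},w]_R$ we may write $w=u\cdot(u^{-1}w)$ with $\ell(w)=\ell(u)+\ell(u^{-1}w)$, so there is a reduced word $Q=Q_uQ''$ for $w$ in which $Q_u=(a_1,\dots,a_{\ell(u)})$ is a reduced word for $u$ and $Q''$ is a reduced word for $u^{-1}w$. The first observation I would record is that for this particular $Q$ one has $Q(u)=\{1,2,\dots,\ell(u)\}$: every subword of $Q$ that is reduced for $u$ uses exactly $\ell(u)$ positions (all reduced words for $u$ have this length), the initial segment $\{1,\dots,\ell(u)\}$ is one such subword since it spells $Q_u$, and among all $\ell(u)$-element subsets of $\{1,\dots,d\}$ the initial segment is the lexicographically first. (Equivalently, the left-to-right greedy procedure producing the lexicographically first reduced subword for $u$ selects positions $1,2,\dots,\ell(u)$ in turn, because after choosing $1,\dots,j-1$ the letter $s_{a_j}$ is a left descent of the remaining factor $s_{a_j}\cdots s_{a_{\ell(u)}}$, which still has a reduced word among the later positions.)

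Now I would apply the hypothesis with this $Q$: $u\leq_Q v$ means $Q(u)\subseteq Q(v)$, hence $\{1,\dots,\ell(u)\}\subseteq Q(v)$. The subword $Q|_{Q(v)}$ is a reduced word for $v$, and listing the positions of $Q(v)$ in increasing order this reduced word begins with the letters in positions $1,\dots,\ell(u)$, that is, with $Q_u$, a reduced word for $u$. Therefore $v$ admits a reduced word having a reduced word for $u$ as a prefix, which is exactly $u\leq_R v$. This completes the reverse inclusion and the theorem; note that the argument in fact uses only that $u\leq_Q v$ for this one word $Q$, not for all of $\red(w)$.

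The one genuinely delicate point — and the step I would be most careful to get right — is the choice of $Q$: it must be built from a reduced word for $u$ (not for $v$) as its prefix, so that $Q(u)$ is forced to be an initial segment and the inclusion $Q(u)\subseteq Q(v)$ then pins $u$ down as a weak-order prefix of $v$. A small worked example in type $A_2$, taking $u=s_1$ and $v=s_2s_1$, shows that choosing instead a $Q$ with a reduced word for $v$ as a prefix (here $Q=(2,1,2)$, with $Q(u)=\{2\}\subseteq\{1,2\}=Q(v)$) fails to detect that $u\not\leq_R v$, whereas the word $Q=(1,2,1)$ built from a reduced word for $u$ does detect it since $Q(u)=\{1\}\not\subseteq\{2,3\}=Q(v)$. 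So this asymmetry is essential rather than cosmetic, and everything else is routine bookkeeping with lengths of reduced words.
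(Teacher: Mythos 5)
Your proof is correct and follows essentially the same route as the paper: both arguments hinge on extending a reduced word for $u$ to a reduced word $Q$ for $w$ (possible since $u\leq_R w$), observing that $Q(u)$ is then the initial segment $\{1,\dots,\ell(u)\}$, and concluding that $Q(u)\subseteq Q(v)$ forces $u$ to be a prefix of the reduced word $Q(v)$ for $v$, hence $u\leq_R v$. The only cosmetic difference is that the paper argues contrapositively (reducing to Bruhat covers and exhibiting a $Q$ with $u\not\leq_Q v$), while you argue directly, even spelling out the lexicographic-minimality of the initial segment, which the paper takes for granted.
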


\begin{proof}
Since weak order is contained in each $Q$-sorting order (see Theorem \ref{sorting}), it is contained in the intersection. Recall also that each $Q$-sorting cover $u\prec_Q v$ is a Bruhat cover $u\prec_B v$ (Theorem \ref{sorting}). Thus, we must show: Given $u$ and $v$ in $[\hat{0},w]_R$ such that $u\prec_B v$ and $u\not\leq_R v$, there exists a reduced word $Q\in\red(w)$ such that $u\not\leq_Q v$.

Let $Q'\in\red(u)$ be any reduced word for $u$. Since we have $u\leq_R w$ by assumption, there exists a reduced word $Q\in\red(w)$ for $w$ which contains $Q'$ as a prefix. We claim that $u\not\leq_Q v$ in $Q$-sorting order. Indeed, the $Q$-sorting subword $Q(u)\subseteq Q$ corresponding to $u$ is just the prefix $Q'\subseteq Q$. Let $Q(v)\subseteq Q$ be the $Q$-sorting word for $v$; that is, the lex-first reduced word for $v$ as a subword of $Q$. Suppose now that $u\leq_Q v$; i.e., that $Q(u)\subseteq Q(v)$. Then $Q(v)$ is a reduced word for $v$ containing the reduced word $Q(u)$ for $u$ as a prefix, which implies that $u\leq_R v$. This is a contradiction.
\end{proof}

\begin{theorem}
The union over $Q\in\red(w)$ of $Q$-sorting orders is the Bruhat order on the weak interval $[\hat{0},w]_R$.
\end{theorem}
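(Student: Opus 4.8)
The plan is to show both containments. For the easy direction, each $Q$-sorting order is contained in the Bruhat order by Theorem~\ref{sorting}, so the union is contained in the Bruhat order on $[\hat{0},w]_R$. For the reverse containment, I would fix $u,v\in[\hat{0},w]_R$ with $u\leq_B v$ (it suffices to treat the case $u\prec_B v$, a Bruhat cover, and then use transitivity within a single $Q$-sorting order along a Bruhat-covering chain --- but one must be careful that the chain stays in $[\hat{0},w]_R$ and is realized in one fixed $Q$; see the obstacle below) and produce a single reduced word $Q\in\red(w)$ with $u\leq_Q v$, i.e.\ with $Q(u)\subseteq Q(v)$.

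The key construction is to build $Q$ so that the lex-first reduced subword of $Q$ for $v$ visibly contains the lex-first reduced subword of $Q$ for $u$. Since $v\leq_R w$, there is a reduced word $Q_v\in\red(v)$ extending to a reduced word $Q = Q_v \cdot R$ for $w$ (append a reduced word for $v^{-1}w$). Now I would choose $Q_v$ itself cleverly: since $u\leq_B v$, any reduced word for $v$ contains a reduced subword for $u$; but I want the \emph{lex-first} reduced subword of $Q$ for $u$ to be contained in the lex-first reduced subword of $Q$ for $v$. The natural move is to pick $Q_v\in\red(v)$ of the form $Q_u \cdot Q_u'$ is impossible in general (that would force $u\leq_R v$), so instead I would argue directly about lex-first subwords: because $Q = Q_v R$ and $Q_v$ is reduced for $v$, the $Q$-sorting word $Q(v)$ is exactly the index set of the prefix $Q_v$ (the lex-first reduced subword for $v$ inside $Q_vR$ is the prefix, as any earlier choice would have to omit a letter of $Q_v$ and borrow from $R$, contradicting lex-minimality). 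Then $Q(u)$ is the lex-first reduced subword of $Q$ for $u$; since $Q_v$ already contains a reduced subword for $u$ and lies at the front of $Q$, this lex-first subword lies entirely within the prefix $Q_v$, hence within the index set of $Q(v)$. Therefore $Q(u)\subseteq Q(v)$ and $u\leq_Q v$.

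The main obstacle is the reduction to Bruhat covers together with the transitivity step: a single Bruhat relation $u\leq_B v$ is handled by the construction above, but to conclude we must either handle $u\leq_B v$ directly (which the argument above in fact does, since nothing used that $u\prec_B v$) or, if working cover-by-cover, ensure all intermediate elements lie in $[\hat{0},w]_R$ and that one fixed $Q$ simultaneously sorts the whole chain. I would therefore present the argument for general $u\leq_B v$ at once, so no chaining is needed; the delicate point that remains is justifying carefully that $Q(v)$ equals the prefix index set and that $Q(u)$ is forced into that prefix --- this hinges on the lexicographic tie-breaking in the definition of $Q(\cdot)$ and on $Q_v$ occupying the initial segment of $Q$, which is exactly the leverage provided by the hypothesis $u,v\leq_R w$.
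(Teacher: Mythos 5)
Your overall strategy is the same as the paper's: since $v\leq_R w$, choose $Q=Q_v\cdot R$ with $Q_v\in\red(v)$ a prefix, observe that $Q(v)$ is exactly the prefix index set $\{1,\dots,k\}$ (this part is fine --- the prefix is lexicographically first among \emph{all} $k$-subsets of positions), and then show $Q(u)\subseteq Q(v)$. The genuine gap is at the key step: you assert that ``since $Q_v$ already contains a reduced subword for $u$ and lies at the front of $Q$, this lex-first subword lies entirely within the prefix.'' That is not a consequence of lex-minimality alone, and you yourself flag it as ``the delicate point that remains'' without supplying an argument. The problem is that an index set which escapes the prefix can still precede every in-prefix reduced subword lexicographically: for example $(1,2,7)$ precedes $(1,3,4)$. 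So the mere existence of a reduced subword for $u$ inside positions $\{1,\dots,k\}$ does not by itself force the lex-first one to stay there; the greedy choice could in principle grab an early letter whose completion is only available beyond position $k$. Ruling this out requires actual Coxeter-theoretic input, not just combinatorics of index sets.

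The paper avoids (most of) this by working only with Bruhat covers $u\prec_B v$, so that the comparison subword is $Q'=\{1,\dots,k\}\setminus\{i\}$, which pins $Q(u)$ down very tightly. Your version treats arbitrary relations $u\leq_B v$, which is actually a nice strengthening --- it sidesteps the chaining issue you correctly raise (a Bruhat relation between elements of $[\hat 0,w]_R$ need not factor through covers staying inside $[\hat 0,w]_R$, and the union of the orders is a union of relations, not automatically transitively closed) --- but then the missing containment must be proved in full generality. One way to close it: induct on $\ell(v)$ using the lifting property of Bruhat order. Write $s=q_1$, the first letter of $Q_v$, so $s\in D_L(v)$. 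If $s\in D_L(u)$, then $su\leq_B sv$, the greedy algorithm selects position $1$, and one recurses on the word $q_2\cdots q_d$ (reduced, with prefix $q_2\cdots q_k$ reduced for $sv$); if $s\notin D_L(u)$, then $u\leq_B sv$, position $1$ cannot begin a reduced subword for $u$, and one recurses on the same suffix. Either way the lex-first reduced subword for $u$ lands in positions $\{1,\dots,k\}$, giving $Q(u)\subseteq Q(v)$ and hence $u\leq_Q v$. With such a lemma in place your argument is correct and, in the cover-free form, arguably cleaner than the paper's; without it, the central step is an unproven assertion.
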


\begin{proof}
Given $u$ and $v$ in $[\hat{0},w]_R$ such that $u\prec_B v$ is a Bruhat cover, we must find a reduced word $Q\in\red(w)$ such that $u\leq_Q v$.

Since $v\leq_R w$ in weak order, there exists a reduced word for $v$ which is a prefix of $Q$. Since a prefix is clearly lex-first, this prefix is the $Q$-sorting word $Q(v)\subseteq Q$. Suppose that $v$ has length $k$, so that $Q(v)$ corresponds to positions $\{1,2,\ldots,k\}$. By the subword definition of Bruhat order above, there exists a reduced word $Q'$ for $u$ in positions $\{1,2,\ldots,k\}-\{i\}$ for some $1\leq i\leq k$. We claim that $u\leq_Q v$. Indeed, we already have $Q'\subseteq Q(v)$. The $Q$-sorting word $Q(u)$ for $u$ is a reduced word for $u$ which precedes $Q'$ lexicographically. Since $Q(v)$ is a prefix of $Q$ we must have $Q(u)\subseteq Q(v)$, or $u\leq_Q v$.
\end{proof}

For example, consider the Coxeter group $W$ of type $B_2$ with generators $\{1,2\}$ and longest element $w_\circ=1212=2121$. In this case we have
\begin{equation*}
\red(w_\circ)=\left\{ Q_1=(1,2,1,2), Q_2=(2,1,2,1)\right\}.
\end{equation*}
Figure \ref{fig:b2} displays the weak order, $Q_1$-sorting order, $Q_2$-sorting order, and Bruhat order on the full group. Here we have used zeroes as placeholders, to indicate the indices not in a given word. Note that the weak order is the intersection and the Bruhat order is the union of the two sorting orders. Next, observe that there is only one reduced word for the element $w=212$, namely $Q=(2,1,2)$. Note that the $Q$-sorting order is just the interval $[\hat{0},212]_{Q_2}$ in $Q_2$-sorting order. Note also that this order coincides {\em neither} with the weak {\em nor} the Bruhat order on the set $[\hat{0},212]_Q=[\hat{0},212]_B$. Our above results instead tell us that the weak order, $Q$-sorting order, and Bruhat order all coincide on the {\em weak} interval $[\hat{0},212]_R$.

\begin{figure}
\begin{center}
\includegraphics[scale=.75]{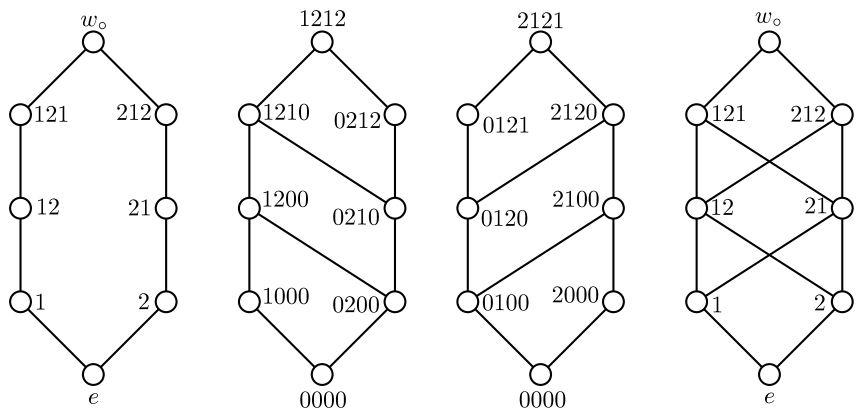}
\end{center}
\caption{The weak order, $Q_1$-sorting order, $Q_2$-sorting order, and Bruhat order}
\label{fig:b2}
\end{figure}

\section{acknowledgments}

The authors thank Ezra Miller for helpful comments.


\begin{thebibliography}{70}


\bibitem[Ar]{Ar} 
D. Armstrong, The sorting order on a Coxeter group, to appear in Journal of Combin.
Theory, Ser. A

\bibitem[Bj]{Bj}
A. Bj\"orner, Posets, Regular CW complexes and Bruhat order, European J. Combin.
{\bf 5} (1984), no. 1, 7--16.

\bibitem[BB]{BB}
A. Bj\"orner and F. Brenti, Combinatorics of Coxeter groups, Graduate Texts in 
Mathematics {\bf 231}, Springer, 2005.

\bibitem[BW]{BW}
A. Bj\"orner and M. Wachs, Bruhat order of Coxeter groups and
shellability, Adv.  Math. {\bf 43} (1982), no. 1, 87--100.




\bibitem[FS]{FS}
S. Fomin and M. Shapiro, Stratified spaces formed by totally positive varieties.
Dedicated to William Fulton on the occasion of his 60th birthday.
Michigan Math. J. {\bf 48} (2000), 253--270.

 
\bibitem[FZ]{FZ}
S. Fomin and A. Zelevinsky, Double Bruhat cells and total positivity, Journal
Amer. Math. Soc., {\bf 12} (April 1999), no. 2, 335--380.


\bibitem[He]{He}
P. Hersh, Regular cell complexes in total positivity, submitted.

\bibitem[Hu]{Hu}
J. Humphreys, Reflection groups and Coxeter groups, Cambridge studies in advanced
mathematics {\bf 29}, Cambridge University Press, 1990.


\bibitem[KM]{KM}
A. Knutson and E. Miller, Subword complexes in Coxeter groups, Adv. Math., {\bf 184}, 
(2004), no. 1, 161--176.
 
 \bibitem[KM2]{KM2}
 A. Knutson and E. Miller, Gr\"obner geometry of Schubert polynomials, Annals of Math., {\bf 161},
 (2005), 1245--1318.
 




\bibitem[Qu]{Qu}
D. Quillen, Homotopy properties of the poset of nontrivial $p$-subgroups of a group,
Advances in Math. {\bf 28} (1978), 101--128.


\bibitem[Wh]{Wh}
A. M. Whitney, A reduction theorem for totally positive matrices, J. d'Analyse Math. {\bf 2}
(1952), 88--92.




\end{thebibliography}
\end{document}